\def\N{\mathbb{N}}
\def\R{\mathbb{R}}
\def\S{\mathbb{S}}
\newcommand{\arccosh}{\operatorname{arccosh}}
\newcommand{\af}{\alpha}
\newcommand{\ep}{\varepsilon}
\begin{document}
\mainmatter              
\title{A geometric reduction method for some fully nonlinear first-order PDEs on semi-Riemannian manifolds}
\titlerunning{Geometric reduction method for fully nonlinear first order PDEs}  
%
\author{Juan Carlos Fernández\inst{1} \and Eddaly Guerra-Velasco\inst{2}
\and Oscar Palmas\inst{3} \and Boris A. Percino-Figueroa\inst{4}}

\authorrunning{J.C. Fernández, E. Guerra-Velasco, O. Palmas, B. Percino-Figueroa} 
%
%
\institute{Departamento de Matemáticas, Facultad de Ciencias, Universidad Nacional Autónoma de México\\
\email{jcfmor@ciencias.unam.mx}
\and
CONAHCYT-Facultad de Ciencias en Física y Matemáticas, Universidad Autónoma de Chiapas, México\\
\email{edaly.velasco@unach.mx, eguerra@conahcyt.mx}
\and
Departamento de Matemáticas, Facultad de Ciencias, Universidad Nacional Autónoma de México\\
\email{oscar.palmas@ciencias.unam.mx}
\and
Facultad de Ciencias en Física y Matemáticas, Universidad Autónoma de Chiapas, México\\
\email{boris.percino@unach.mx}}

\maketitle              

\begin{abstract}
Given a semi-Riemannian manifold $(M,\langle \cdot,\cdot\rangle_g),$ we use the transnormal functions defined on $M$ to reduce fully nonlinear first order PDEs of the form
\[
F(x,u,\langle \nabla_g u, \nabla_g u \rangle_g) = 0,\qquad \text{on }M
\]
into ODEs and obtain local existence results of solutions which are constant along the level sets of the transnormal functions. In particular, we apply this reduction method to obtain new solutions to eikonal equations with a prescribed geometry.
\keywords{eikonal equation, semi-Riemannian manifolds, transnormal functions, isoparametric functions, fully nonlinear first order PDE}

\bigskip

\textbf{MSC:} 34A05, 35B06, 35F20, 53C21, 58J70.
\end{abstract}

\section{Introduction}
Let $(M,\langle \cdot,\cdot\rangle_g)$ be a semi-Riemannian manifold without boundary of dimension $n\geq 3$. In this paper, we are interested in solutions to a fully nonlinear partial differential equation of the form 
\begin{equation}\label{Problem:MainEquation}
F(x, u(x),  \left\langle \nabla_g u(x),\nabla_g u(x) \right\rangle_g ) = 0,\qquad \text{on }M.
\end{equation}
where $u\colon M\to\R$ is a differentiable function. Notice that, when  $F$ is independent of $u$, \eqref{Problem:MainEquation} becomes
\begin{equation*}
 F(x, \left\langle \nabla_g u(x),\nabla_g u(x) \right\rangle_g ) = 0,\qquad \text{on }M.
\end{equation*}
which is a generalization of the eikonal equation 
\begin{equation}\label{Probelm:Eikonal}
\langle \nabla_{g(x)} u(x) , \nabla_{g(x)} u(x) \rangle_g=U(x),\quad U\in C^{\infty}(M).
\end{equation}


In this paper, we will prove the existence of local solutions to  problem \eqref{Problem:MainEquation} with a prescribed geometry on the level sets in two different ways. To establish our main results, we define and recall some concepts.

For a smooth function $f\colon M\to\R^k $, we will say that $F\colon  M\times \R^2\to \R $ is \emph{$f$-invariant in the first variable} if for any
$x_1,x_2\in M$ such that $f(x_1)=f(x_2)$, we have that 
\begin{equation*}
    F(x_1,r,p)=F(x_2,r,p),
\end{equation*}
for all  $(r,p)\in\R^2$. This is equivalent to saying that there is a unique smooth function $\widehat{F}\colon \mathrm{Im}\,f\times\R^2\subset \mathbb{R}^{k+2}\to\R$ such that for all $(r,p)\in\R^2$,
\begin{equation*}
F(x,r,p) = \widehat{F}(f(x),r,p),\qquad x\in M.
\end{equation*}

We say that a neighborhood $\Omega\subset M$ is $f$-invariant if $x\in\Omega$ implies $y\in\Omega$ for any $y\in M$ such that $f(x)=f(y)$. In other words, $\Omega$ is the union of level sets of $f$.

We make extensive use of transnormal functions. Given a semi-Riemannian manifold $(M,g)$, we say that $f\colon M\rightarrow \mathbb{R}$ is a \emph{transnormal} function if there exists a smooth function $a\colon\mathbb{R}\rightarrow\mathbb{R}$ such that
 \begin{equation*}
     \langle \nabla_g f, \nabla_g f \rangle_g = a\circ f.
 \end{equation*}
 
Examples of transnormal functions include isoparametric functions (see, for example,  \cite{FernandezPalmas2021,Hahn1984,LawnOrtega2022} for the semi-Riemannian setting and the book \cite{CecilRyanBook} in the Riemannian setting). In the Riemannian setting, there are explicit examples of transnormal functions which are not isoparametric (cf. \cite{Miyaoka2013}), but almost nothing is known in the semi-Riemannian case. 

Isoparametric functions have proven to be useful to find solutions for many PDEs on manifolds; for example, in nonlinear second order elliptic and ultrahyperbolic PDEs \cite{FernandezPetean2020,FernandezPalmas2021}, nonlinear higher order elliptic PDEs \cite{ClappFernandezSaldana2021,FernandezPalmasTorres2023,GrunauAhmedouReichel2008} and mean curvature flow equations \cite{LawnOrtega2022,OrtegaYildirim2024}. The main idea in this context is to use isoparametric functions to reduce the PDEs into ODEs which are simpler to analyze, generalizing the idea of looking for radial solutions in $\mathbb{R}^n$ or in a ball in $\mathbb{R}^n$. However, for many fully nonlinear first order PDEs, as equation \eqref{Problem:MainEquation}, it suffices to perform the reduction method by using only transnormal functions. 

In this paper we show two instances of this method, reducing problem \eqref{Problem:MainEquation} to an ODE or a two-dimensional PDE defined in a rectangle in $\mathbb{R}^2$. Let us describe briefly the method, main results and applications for each instance. In first place, for the reduction into an ODE, the key result is the following one.


\begin{proposition}\label{Proposition:OneDimensionalReduction}
Let $f\colon M\rightarrow \mathbb{R}$ be a transnormal function, $w\colon \mathbb{R}\rightarrow\mathbb{R}$ smooth and suppose $F\colon M\times\mathbb R^2\to\mathbb R$ is $f$-invariant in the first variable. Then $u=w\circ f$ is a solution to the equation
\begin{equation*}
F(x, u, \langle \nabla_g u, \nabla_g u \rangle_g) = 0,\qquad \text{on } M
\end{equation*}
if and only if $w$ is a solution to
\begin{equation}\label{Problem:MainOneDimensionalEquation}
\widehat{F}\left(t, w(t),a(t)\vert w'(t)\vert^2\right) = 0,\quad \text{in }\mathrm{Im}\,f\subset\mathbb{R}.
\end{equation}
\end{proposition}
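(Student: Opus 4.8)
The plan is to reduce the whole statement to a single pointwise identity coming from the chain rule, and then convert ``$F=0$ everywhere on $M$'' into ``$\widehat F = 0$ on $\mathrm{Im}\,f$'' by using the definition of $f$-invariance together with the surjectivity of $f$ onto its image.

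First I would compute the gradient of $u = w\circ f$. Since the metric $\langle\cdot,\cdot\rangle_g$ is nondegenerate, $\nabla_g u$ is characterized by $\langle \nabla_g u, X\rangle_g = du(X)$ for every vector field $X$. As $du = (w'\circ f)\,df$, this gives $\langle \nabla_g u, X\rangle_g = (w'\circ f)\,\langle \nabla_g f, X\rangle_g$ for all $X$, and nondegeneracy of the metric forces $\nabla_g u = (w'\circ f)\,\nabla_g f$. This is the one place where the semi-Riemannian structure actually enters, and it is precisely nondegeneracy (not positive-definiteness) that makes the chain rule for the gradient valid even though the metric is indefinite. Taking the squared norm and invoking transnormality, $\langle \nabla_g f, \nabla_g f\rangle_g = a\circ f$, I obtain $\langle \nabla_g u, \nabla_g u\rangle_g = (w'\circ f)^2\,(a\circ f)$. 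Evaluating at a point $x\in M$ and writing $t = f(x)$, this reads $a(t)\,|w'(t)|^2$, while $u(x) = w(t)$.

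Next I would substitute these into $F$ and use $f$-invariance to pass to $\widehat F$: for each $x\in M$ with $t=f(x)$,
\[
F\big(x,u(x),\langle \nabla_g u,\nabla_g u\rangle_g(x)\big)
= \widehat F\big(f(x),w(f(x)),a(f(x))\,|w'(f(x))|^2\big)
= \widehat F\big(t,w(t),a(t)\,|w'(t)|^2\big).
\]
Thus the value of the PDE at $x$ depends on $x$ only through $t=f(x)$, via exactly the expression appearing in \eqref{Problem:MainOneDimensionalEquation}. The equivalence is then immediate from this identity. For the ``if'' direction, given $t\in\mathrm{Im}\,f$ choose any $x$ with $f(x)=t$ (such an $x$ exists by the definition of the image); the identity yields $\widehat F(t,w(t),a(t)|w'(t)|^2) = F(x,\dots) = 0$, and since $f$ surjects onto $\mathrm{Im}\,f$ this covers every $t$. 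For the ``only if'' direction, if $w$ solves \eqref{Problem:MainOneDimensionalEquation} on $\mathrm{Im}\,f$, then for each $x\in M$ one sets $t=f(x)\in\mathrm{Im}\,f$ and reads off $F(x,\dots)=\widehat F(t,w(t),a(t)|w'(t)|^2)=0$.

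There is no serious obstacle here: the argument is a direct verification. The only two points deserving a line of care are the chain rule $\nabla_g u = (w'\circ f)\,\nabla_g f$ in the indefinite-metric setting, which rests on nondegeneracy, and the (trivial) remark that $f$ maps \emph{onto} $\mathrm{Im}\,f$, which is what guarantees that the reduced equation \eqref{Problem:MainOneDimensionalEquation} holds at \emph{every} $t\in\mathrm{Im}\,f$ rather than merely at the values attained along a particular fiber.
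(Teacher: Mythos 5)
Your proof is correct and is exactly the ``straightforward computation'' the paper alludes to without writing out: chain rule for the gradient via nondegeneracy, transnormality to get $a(t)|w'(t)|^2$, $f$-invariance to pass to $\widehat F$, and surjectivity of $f$ onto $\mathrm{Im}\,f$ for the two directions of the equivalence. Nothing is missing and no different route is taken.
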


 

The proof of this result is a straightforward computation. As a consequence, we obtain our first main result, a  local existence result for equation \eqref{Problem:MainEquation}:

\begin{theorem}\label{Theorem:OneDimensionalReduction}
Let $(M,g)$ be a semi-Riemannian manifold, $f\colon M\rightarrow\mathbb{R}$ be a transnormal function with
$\langle \nabla_g f, \nabla_g f \rangle_g = a\circ f$ and let $t_0\in\mathbb{R}$ be a regular value of $f$. Define $M_0:=f^{-1}(t_0).$ Then, for any $C^1$ function $F\colon M\times\mathbb{R}^2\rightarrow\mathbb{R}$ which is $f$-invariant in the first variable, and for any $(x_0,r_0,p_0)\in M_0\times \mathbb{R}\times(\mathbb{R}\smallsetminus\{0\})$ satisfying  
\begin{equation}\label{Equation:Theorem:ImplicitFunctionTheoremHypothesis}
F(x_0,r_0,p_0) = 0,\quad  \frac{\partial F}{\partial p} (x_0,r_0,p_0)\neq 0 \quad \text{and}\quad a(t_0)p_0>0,
\end{equation}
equation \eqref{Problem:MainEquation} posseses a unique $f$-invariant solution defined in an $f$-invariant domain $\Omega\subset M$ such that $M_0\subset\Omega$.
\end{theorem}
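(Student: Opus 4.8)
The plan is to reduce the PDE to the one-dimensional equation \eqref{Problem:MainOneDimensionalEquation} via Proposition \ref{Proposition:OneDimensionalReduction} and then build the profile $w$ by combining the implicit function theorem with the Picard--Lindelöf existence and uniqueness theorem. First I would record that, since $t_0$ is a regular value, $\nabla_g f\neq 0$ along $M_0$ and $f$ is a submersion on a neighborhood of $M_0$; composing $F$ with a local smooth section of $f$ shows that the reduced function $\widehat F$ is $C^1$ near $t_0$. By Proposition \ref{Proposition:OneDimensionalReduction}, an $f$-invariant function $u=w\circ f$ solves \eqref{Problem:MainEquation} if and only if $w$ solves $\widehat F(t,w(t),a(t)|w'(t)|^2)=0$, so the whole problem becomes producing a $C^1$ profile $w$ on an interval $I\ni t_0$ with $w(t_0)=r_0$ solving this ODE.

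Next I would solve the reduced equation for $w'$. The hypothesis $a(t_0)p_0>0$ guarantees that $p_0/a(t_0)>0$, so I may fix a branch $q_0:=\sqrt{p_0/a(t_0)}$ (the sign choice is discussed below) so that $a(t_0)q_0^2=p_0$. Consider $G(t,w,q):=\widehat F(t,w,a(t)q^2)$, which is $C^1$ near $(t_0,r_0,q_0)$ and satisfies $G(t_0,r_0,q_0)=\widehat F(t_0,r_0,p_0)=F(x_0,r_0,p_0)=0$. A direct computation gives
\[
\frac{\partial G}{\partial q}(t_0,r_0,q_0)=2\,a(t_0)\,q_0\,\frac{\partial\widehat F}{\partial p}(t_0,r_0,p_0),
\]
which is nonzero because $a(t_0)\neq0$ (as $a(t_0)p_0>0$ with $p_0\neq0$), $q_0\neq0$, and $\partial F/\partial p(x_0,r_0,p_0)\neq0$. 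The implicit function theorem then yields a $C^1$ function $q=\Phi(t,w)$, defined near $(t_0,r_0)$ with $\Phi(t_0,r_0)=q_0$, such that $G(t,w,\Phi(t,w))=0$.

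The remaining step is to solve the first-order ODE $w'(t)=\Phi(t,w(t))$ with $w(t_0)=r_0$. Since $\Phi$ is $C^1$, it is locally Lipschitz in $w$, so the Picard--Lindelöf theorem provides a unique $C^1$ solution $w$ on some open interval $I\ni t_0$ (shrinking $I$ so that $I\subset\operatorname{Im}f$). By construction $\widehat F(t,w(t),a(t)|w'(t)|^2)=G(t,w(t),\Phi(t,w(t)))=0$ on $I$, so Proposition \ref{Proposition:OneDimensionalReduction} shows that $u:=w\circ f$ solves \eqref{Problem:MainEquation} on the open $f$-invariant set $\Omega:=f^{-1}(I)\supset M_0$.

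For uniqueness, any $f$-invariant $C^1$ solution $u$ on an $f$-invariant domain is constant on level sets, hence of the form $u=w\circ f$; because $f$ is a submersion near $M_0$, the profile $w$ is itself $C^1$ and, by Proposition \ref{Proposition:OneDimensionalReduction}, solves the reduced ODE with $w(t_0)=r_0$ and $a(t_0)w'(t_0)^2=p_0$. The main subtlety—and the step I expect to require the most care—is precisely this extraction of $w'(t_0)$: the relation $a(t_0)w'(t_0)^2=p_0$ only determines $|w'(t_0)|$, so the prescribed data $(x_0,r_0,p_0)$ pins down the profile only after choosing the sign $q_0=\pm\sqrt{p_0/a(t_0)}$. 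Once this branch is fixed, $w$ satisfies $w'=\Phi(t,w)$ with $w(t_0)=r_0$, and the uniqueness clause of Picard--Lindelöf forces $w$, and therefore $u$, to coincide with the solution constructed above.
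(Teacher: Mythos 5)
Your proof is correct and follows essentially the same route as the paper: reduce to the one-dimensional equation via Proposition \ref{Proposition:OneDimensionalReduction}, apply the implicit function theorem, and solve the resulting Cauchy problem by Picard--Lindelöf. The only (harmless) variation is that you apply the implicit function theorem to the composite $G(t,w,q)=\widehat F(t,w,a(t)q^2)$ in the variable $q$, whereas the paper solves $\widehat F(t,r,p)=0$ for $p=H(t,r)$ first and then sets $w'=\sqrt{H(t,w)/a(t)}$; your explicit remark on the sign ambiguity of $w'(t_0)$ is a point the paper handles only implicitly by fixing the positive branch.
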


We prove this result in Section \ref{Section:OneDimensional}, which is an easy consequence of the Theorem of Existence and Uniqueness of ODEs. In the same section, and in order to illustrate the method behind the proof, we give some applications  in case of the eikonal equation \eqref{Probelm:Eikonal} in semi-Riemannian and Riemannian manifolds. In particular, using isoparametric functions defined in the semi-Euclidean space and in the pseudo-sphere \cite{FernandezPalmas2021,Hahn1984}, we construct, in Examples \ref{Example:ToyExampleEuclidian} and \ref{Example:CartanMunznerEikonal}, several solutions to this problem in both manifolds, with prescribed nontrivial geometry on the level sets. In Example \ref{Example:GammaInvariantEikonal}, we do the same reduction for cohomogeneity-one Riemannian manifolds \cite{FernandezPalmasTorres2023}. Moreover, in Example \ref{Example:HyperbolicOneDimensional}, we give a concrete and interesting example of a solution to equation \eqref{Probelm:Eikonal} defined in the hyperbolic Riemannian space, which is constant along the level sets of a transnormal function having non-isoparametric level sets \cite{Miyaoka2013}. In particular, we have the following result for the Riemannian eikonal equation. We denote by $\mathrm{Isom}(M,g)$ the group of isometries of $M$.

\begin{corollary}\label{Corollary:Eikonal}
Let $c>0$ and consider the eikonal equation 
\begin{equation}\label{Equation:RiemannianEikonal}
\langle \nabla_g u, \nabla_g u \rangle_g = c, \qquad \text{on }M,
\end{equation}
where $(M,g)$ is Riemannian.
\begin{enumerate}[(i)]
\item If $(M,g)$ is a closed Riemannian manifold and $\Gamma$ is a closed subgroup of $\mathrm{Isom}(M,g)$ acting by cohomogeneity one on $M$, then there exists a $\Gamma$-invariant solution to \eqref{Equation:RiemannianEikonal},  defined in $M\smallsetminus \mathcal{Z}$, where $\mathcal{Z}$ is the union of the singular orbits of the action.
\item If $(M,g)$ is the sphere with its canonical metric $(\mathbb{S}^n,g_0)$, then, for any isoparametric hypersurface $M_0\subset \mathbb{S}^n$, there exists a solution to equation \eqref{Equation:RiemannianEikonal}, constant on $M_0$ and defined in $\mathbb{S}^n\smallsetminus \mathcal{Z}$, where $\mathcal{Z}$ is a union of two submanifolds of $M$ of codimension $\geq 2$.
\item If $(M,g)$ is the hyperbolic space with its canonical metric $(\mathbb{H}^n,g_0)$, there exists a non isoparametric hypersurface $M_0$ in $\mathbb{H}^n$ and a globally defined solution $u:\mathbb{H}^n\rightarrow\mathbb{R}$ such that $u$ is constant on $M_0$.
\end{enumerate}
\end{corollary}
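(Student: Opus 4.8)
The plan is to reduce each part of the corollary to the scalar ODE produced by Proposition \ref{Proposition:OneDimensionalReduction} and then integrate it explicitly. For the eikonal equation \eqref{Equation:RiemannianEikonal} the nonlinearity is $F(x,r,p)=p-c$, which is independent of $x$ and hence trivially $f$-invariant in the first variable for every choice of transnormal function $f$, with $\widehat F(t,r,p)=p-c$. By Proposition \ref{Proposition:OneDimensionalReduction}, a function of the form $u=w\circ f$ solves \eqref{Equation:RiemannianEikonal} if and only if $w$ satisfies $a(t)\vert w'(t)\vert^2=c$ on $\mathrm{Im}\,f$. On any subinterval where $a>0$ this separates as $w'(t)=\sqrt{c/a(t)}$, and I would take the explicit primitive
\begin{equation*}
w(t)=r_0+\int_{t_0}^{t}\sqrt{\frac{c}{a(s)}}\,ds,
\end{equation*}
which is smooth precisely where $a>0$ and the integral converges. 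Thus the whole problem becomes: (a) exhibit, for each geometry, a transnormal function $f$ with an explicit profile $a$; (b) check $a>0$ on the relevant range of regular values (so that the hypothesis $a(t_0)p_0>0$ of Theorem \ref{Theorem:OneDimensionalReduction} holds with $p_0=c$); and (c) control the above integral up to the endpoints in order to identify the maximal $f$-invariant domain of $u=w\circ f$ and the excluded set $\mathcal Z$.

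For part (i) I would use the $\Gamma$-invariant transnormal function associated with the cohomogeneity-one action supplied by \cite{FernandezPalmasTorres2023}; its level sets on $M\smallsetminus\mathcal Z$ are exactly the principal orbits, so an $f$-invariant function is automatically $\Gamma$-invariant. On the regular part one has $a>0$, and since $M$ is closed the range of regular values is a bounded interval on which the primitive above is finite; composing gives a $\Gamma$-invariant solution on $M\smallsetminus\mathcal Z$. Concretely, taking $f$ to be the distance to a singular orbit yields $a\equiv1$ and the solution $u=\sqrt c\,f$. For part (ii) I would invoke Cartan–Münzner theory: an isoparametric hypersurface $M_0\subset\mathbb S^n$ is a regular level set $f^{-1}(t_0)$ of the restriction $f=F\vert_{\mathbb S^n}$ of a Cartan–Münzner polynomial of degree $g$, with $f\colon\mathbb S^n\to[-1,1]$ and $a(t)=g^2(1-t^2)$. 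Since $a>0$ on $(-1,1)$ and $\int(1-s^2)^{-1/2}\,ds$ converges there, the primitive $w$ is well defined on the whole open interval, and $u=w\circ f$ is a solution constant on $M_0$; its domain is $\mathbb S^n\smallsetminus(f^{-1}(1)\cup f^{-1}(-1))=\mathbb S^n\smallsetminus\mathcal Z$, where the two focal submanifolds $f^{-1}(\pm1)$ have codimension $\ge2$.

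For part (iii) I would take the globally defined, non-isoparametric transnormal function on $\mathbb H^n$ constructed in \cite{Miyaoka2013} and analyzed in Example \ref{Example:HyperbolicOneDimensional}. Because this $f$ has no critical points, its profile satisfies $a>0$ on all of $\mathrm{Im}\,f$, so the primitive $w$ is finite over the entire range and $u=w\circ f$ is defined on all of $\mathbb H^n$; as $u$ and $f$ share the same level sets, $M_0$ is a non-isoparametric hypersurface.

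The main obstacle throughout is step (c): passing from the local statement of Theorem \ref{Theorem:OneDimensionalReduction} to these global conclusions requires checking the convergence and smoothness of the primitive up to the extreme values of $f$ and correctly identifying $\mathcal Z$. In particular, in part (ii) this amounts to the integrability of $(1-s^2)^{-1/2}$ at $\pm1$ together with the codimension count for the focal sets, and in part (iii) to verifying that Miyaoka's function is free of critical values so that the integral never degenerates.
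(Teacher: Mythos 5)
Your proposal is correct and follows essentially the same route as the paper: the paper's proof of the corollary simply specializes Examples \ref{Example:GammaInvariantEikonal}, \ref{Example:CartanMunznerEikonal} (with $s=0$) and \ref{Example:HyperbolicOneDimensional} to $U\equiv c$, and those examples carry out exactly the reduction via Proposition \ref{Proposition:OneDimensionalReduction} and the explicit primitives $w(t)=\int_{t_0}^t\sqrt{c/a}$ that you describe (with the same three transnormal functions: the distance to a singular orbit, the restricted Cartan--Münzner polynomial with $a(t)=\ell^2(1-t^2)$, and Miyaoka's critical-point-free function with $a\equiv 1$). Your explicit attention to the global/endpoint issues in step (c) matches what the examples do by direct integration.
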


We prove this Corollary at the end of Section \ref{Section:OneDimensional}.

\medskip

In Section \ref{Section:TwoDimensional} we will explore the local existence of solutions to problem \eqref{Problem:MainEquation} with a different qualitative behavior. To describe it, let $ (L,g_L) $ and $ (N,g_N)$ be two connected and complete Riemannian manifolds, without boundary, of dimensions $s$ and $k$, respectively, and consider the semi-Riemannian warped product
\begin{equation}\label{Eq:WarpedProduct}
L\times_\af N:=(L\times N,g:=-g_L+\af^2g_N),
\end{equation}
where $\af\colon L\to\R$ is smooth and positive.

Let $f_L\colon L\to\R$ and $f_N\colon N\to\R$ be transnormal functions and consider a $C^2$ function $F\colon (L\times N)\times \R^2\to\R$ which is $(f_L\times f_N)$-invariant in the first variable, that is,
\begin{equation*}
F(x_1,z_1,r,\tau)=F(x_2,z_2,r,\tau),
\end{equation*}
whenever $f_L(x_1)=f_L(x_2)$ and $f_N(z_1)=f_N(z_2)$. As before, this implies the existence of a smooth function $\widehat{F}\colon \R^2\times\R^2\to \R$ such that
\begin{equation}\label{eq:2-inv}
F(x,z,r,\tau)=\widehat{F}(f_L(x),f_N(z),r,\tau). 
\end{equation}
In this setting, problem \eqref{Problem:MainEquation} can be written as:
\begin{equation}\label{Proble:2partial}
    F(x,y,u,\langle \nabla_g u,\nabla_g u\rangle_g)=0\qquad \text{on }\  L\times_\alpha N.
\end{equation}

In Section \ref{Section:TwoDimensional} we will show the local existence of functions $u\in C^{\infty}(L\times N)$ satisfying \eqref{Proble:2partial} and such that, for fixed $x\in L$ and $y\in N$, the functions $u(\cdot,y)$ and $u(x,\cdot)$ are constant on the level sets of $f_L$ and $f_N$, respectively. The key step is to reduce the problem \eqref{Proble:2partial} into a $2$-dimensional PDE of the first order in a rectangle in $\mathbb{R}^2$, easier to handle and visualize, for instance, by numerical methods.


We state the reduction method in the following proposition. In what follows, we define the intervals $I_{f_L}:=\mathrm{Im}\,f_L$ and $I_{f_N}:=\mathrm{Im}\,f_N$, the parameters $t:=f_L(x)\in I_{f_N}$ and $s:=f_N(y)\in I_{f_N}$, so that $\widehat{F}=\widehat{F}(t,s,r,\tau)$ is defined in $I_{f_L}\times I_{f_N}\times\mathbb{R}^2$, and the functions $a_{f_L}$ and $a_{f_N}$ are such that $\vert \nabla_{g_L}f_L\vert^2_g = a_{f_L}\circ f$ and $\vert \nabla_{g_N}f_N\vert^2_g = a_{f_N}\circ f$.

\begin{proposition}\label{Proposition:TwoDimensional}
Let $F$ be $(f_L\times f_N)$-invariant in the first variable as in \eqref{eq:2-inv}, and let $w\colon \mathbb{R}^2\rightarrow\mathbb{R}$ be smooth. If $\alpha=\widehat{\alpha}\circ f_L$ with $\widehat{\alpha}\colon \mathbb{R}\rightarrow\mathbb{R}$ smooth, then  $u=w(f_L,f_N)$ is a solution of \eqref{Proble:2partial} if and only if $w$ satisfies
\begin{equation}\label{Problem:Main2Dimensions}
\widehat{F}\left(t,s,w(t,s), - a_{f_L}(t) w_t^2(t,s) + \frac{a_{f_N}(s)}{\widehat{\alpha}^2(t)} w_s^2(t,s) \right) = 0,\quad \text{in } I_{f_L}\times I_{f_N},
\end{equation}
where $w_t:=\frac{\partial w}{\partial t}$ and $w_s:=\frac{\partial w}{\partial s}$.
\end{proposition}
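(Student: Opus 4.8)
The plan is to reduce everything to a single gradient computation in the warped metric and then pass through the invariance of $F$. The only genuine work is evaluating $\langle \nabla_g u, \nabla_g u\rangle_g$ for $u = w(f_L,f_N)$ in the signature-mixed metric $g = -g_L + \alpha^2 g_N$; after that, the statement is a matter of bookkeeping. The whole argument is the two-variable analogue of Proposition~\ref{Proposition:OneDimensionalReduction}.

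First I would record the $g$-orthogonal decomposition of the gradient. Since $T(L\times N) = TL\oplus TN$ and $g$ restricts to $-g_L$ on $TL$ and to $\alpha^2 g_N$ on $TN$ (with no cross terms), the defining identity $g(\nabla_g u,\cdot) = du$ splits into its $L$- and $N$-parts. Writing $\nabla_{g_L}u$ and $\nabla_{g_N}u$ for the gradients of $u$ taken along the $L$- and $N$-slices, this gives
\[
\nabla_g u = -\nabla_{g_L}u + \frac{1}{\alpha^2}\,\nabla_{g_N}u,
\]
and, taking the $g$-norm and using that the two summands are $g$-orthogonal,
\[
\langle \nabla_g u,\nabla_g u\rangle_g = -\,|\nabla_{g_L}u|^2_{g_L} + \frac{1}{\alpha^2}\,|\nabla_{g_N}u|^2_{g_N}.
\]
The minus sign in the first term is the subtle point: it comes directly from $g|_{TL} = -g_L$, and is exactly where a naive Riemannian computation would go astray.

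Next I would apply the chain rule to $u = w(f_L,f_N)$. For fixed $y\in N$ one has $\nabla_{g_L}u = w_t\,\nabla_{g_L}f_L$, so that $|\nabla_{g_L}u|^2_{g_L} = w_t^2\,|\nabla_{g_L}f_L|^2_{g_L} = a_{f_L}(t)\,w_t^2$ by the transnormality of $f_L$; symmetrically $|\nabla_{g_N}u|^2_{g_N} = a_{f_N}(s)\,w_s^2$. Substituting $\alpha^2 = \widehat{\alpha}^2(t)$—which is precisely where the hypothesis $\alpha = \widehat{\alpha}\circ f_L$ enters, making $\alpha$ a function of $t = f_L(x)$ alone so that the coefficient reduces correctly—yields
\[
\langle \nabla_g u,\nabla_g u\rangle_g = -\,a_{f_L}(t)\,w_t^2 + \frac{a_{f_N}(s)}{\widehat{\alpha}^2(t)}\,w_s^2,
\]
evaluated at $t = f_L(x)$ and $s = f_N(y)$.

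Finally I would invoke the $(f_L\times f_N)$-invariance \eqref{eq:2-inv} to rewrite $F(x,y,u,\langle\nabla_g u,\nabla_g u\rangle_g) = \widehat{F}(t,s,w(t,s),\langle\nabla_g u,\nabla_g u\rangle_g)$ and plug in the expression above, obtaining exactly the left-hand side of \eqref{Problem:Main2Dimensions}. The equivalence then follows because $f_L$ and $f_N$ are surjective onto $I_{f_L}$ and $I_{f_N}$, so that the equation holds at every $(x,y)\in L\times N$ if and only if it holds at every $(t,s)\in I_{f_L}\times I_{f_N}$. I do not anticipate any real obstacle beyond keeping the signature bookkeeping straight in the gradient norm; every remaining step is a routine chain-rule computation.
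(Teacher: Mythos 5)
Your argument is correct and follows essentially the same route as the paper: the gradient decomposition $\nabla_g u = -\nabla_{g_L}u + \alpha^{-2}\nabla_{g_N}u$ with the resulting norm formula, the chain rule for $u=w(f_L,f_N)$, transnormality, and the $(f_L\times f_N)$-invariance are exactly the two lemmas and corollary the paper uses before declaring the proof straightforward. You even correctly state $|\nabla_{g_N}u|^2_{g_N}=a_{f_N}(s)\,w_s^2$, where the paper's second lemma contains a typo.
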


We prove this proposition in Section \ref{Subsection:Computations}. As before, this proposition also implies a local existence result of $(f_L\times f_N)$-invariant solutions. In order to state it, define the singular Riemannian foliations in $L$ and $N$,  induced by the transnormal functions $f_L$ and $f_N$, 
\begin{equation*}
\mathcal{F}_{f_L}:=\{f_L^{-1}(t): t\in\mathbb{R}\}\qquad\text{and}\qquad \mathcal{F}_{f_N}:=\{f_N^{-1}(s) : s\in\mathbb{R}\},
\end{equation*}
respectively. Since $(L,g_L)$ and $(N, g_N)$ are Riemannian manifolds, the leaves of both foliations are also smooth manifolds (see \cite{Wang1987}). Notice also that $f_L$ and $f_N$ induce functions $C_{f_L}\colon\mathcal{F}_{f_L}\to \R$ and $C_{f_N}\colon\mathcal{F}_{f_N}\to \R$ given as $C_{f_L}(f_L^{-1}(t))=f_L(x)$ for any $x\in f_L^{-1}(t)$ and $C_{f_N}(f_N^{-1}(s))=f_N(z)$ for any $z\in f^{-1}(s)$. We say that a function $\Sigma_{f_L}\colon I\subset \mathbb{R}\rightarrow\mathcal{F}_{f_L}$ is a \emph{smooth uniparametric family of leaves in $L$} if $C_{f_L}\circ \Sigma_{f_L}\colon  I\subset\mathbb{R}\rightarrow\mathbb{R}$ is smooth. In the same way, we define a smooth uniparametric family of leaves in $N$.
As a consequence, we have the following local existence result.

\begin{theorem}\label{Theorem:TwoDimensional}
Let $f_L$ and $f_N$ be transnormal in $L$ and $N$, and let $t_0$ and $s_0$ be regular values of $f_L$ and $f_N$, respectively. Define $L_0:=f_L^{-1}(t_0)$ and $N_0:=f_N^{-1}(s_0).$ Let $F\colon (L\times N)\times\mathbb{R}^2\rightarrow\mathbb{R}$ be a $C^2$, $(f_L\times f_N)$-invariant function on the first variable. Suppose  that $(x_0,z_0,r_0,p_0,q_0)\in L_0\times N_0\times\mathbb{R}^3$ and 
\begin{equation*}
\tau_0:=-a_{f_L}(t_0)p_0^2+\frac{a_{f_N}(s_0)}{\widehat{\alpha}^{2}(t_0)}q_0^2
\end{equation*}
are such that $(p_0,q_0)\neq(0,0)$,
\begin{equation}\label{Equation:Theorem:LocalSolvability1}
F(x_0,z_0,r_0,\tau_0)=0,
\end{equation}
and suppose further that
\begin{equation}\label{Equation:Theorem:NontrivialCondition}
\frac{\partial F}{\partial \tau}(x,z,r,\tau)\neq 0
\end{equation}
in a neighborhood of $(t_0,s_0,r_0,\tau_0)$. Then, for any given smooth function \linebreak$R\colon (-\varepsilon,\varepsilon)\rightarrow\mathbb{R}$ and any pair of smooth one parameter families $\Sigma_{f_L},\Sigma_{f_N}$ defined in $(-\varepsilon,\varepsilon)$ such that 
\begin{enumerate}[(i)]
\item[\emph{($\Sigma.1$)}]\qquad $\Sigma_{f_L}(0)= L_0\ $ and $\ \Sigma_{f_N}(0)= N_0$,
\item[\emph{($\Sigma.2$)}] \label{Equation:Theorem:LocalSolvability2}
\qquad$q_0\;\frac{d}{d\zeta}\big\vert_{\zeta=0}\left(\Sigma_{f_L}\circ C_{f_L}\right) \neq \ p_0\;\frac{d}{d\zeta} \big\vert_{\zeta=0}\left(\Sigma_{f_N}\circ C_{f_N} \right)$, and
\item[\emph{($\Sigma.3$)}]
\label{Equation:Theorem:LocalSolvability3}
\qquad $R'(0) = p_0\; \frac{d}{d\zeta}\big\vert_{\zeta=0}\left(\Sigma_{f_L}\circ C_{f_L}\right)  +  q_0\; \frac{d}{d\zeta}\big\vert_{\zeta=0}\left(\Sigma_{f_N}\circ C_{f_N} \right)$,
\end{enumerate}
the problem \eqref{Proble:2partial} has an $(f_L\times f_N)$-invariant solution $u$ defined in an $(f_L\times f_N)$-invariant open neighborhood of $L_0\times N_0$, satisfying the condition
\begin{equation*}
u(\Sigma_{f_L}(\zeta),\Sigma_{f_N}(\zeta)) = R(\zeta),\quad \zeta\in (-\varepsilon,\varepsilon).
\end{equation*}
\end{theorem}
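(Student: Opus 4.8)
The plan is to feed the reduction of Proposition~\ref{Proposition:TwoDimensional} into the classical method of characteristics for a single fully nonlinear first-order PDE in the two variables $(t,s)$. By that proposition, an $(f_L\times f_N)$-invariant function $u=w(f_L,f_N)$ solves \eqref{Proble:2partial} if and only if $w$ solves the reduced equation \eqref{Problem:Main2Dimensions} on the rectangle $I_{f_L}\times I_{f_N}$. Abbreviating $p=w_t$, $q=w_s$ and
\[
G(t,s,w,p,q):=\widehat F\!\left(t,s,w,\,-a_{f_L}(t)\,p^2+\tfrac{a_{f_N}(s)}{\widehat\alpha^2(t)}\,q^2\right),
\]
the problem becomes the local solvability of the Cauchy problem $G(t,s,w,w_t,w_s)=0$ near $(t_0,s_0)$ with data carried along the planar curve $\gamma(\zeta):=\big(C_{f_L}(\Sigma_{f_L}(\zeta)),\,C_{f_N}(\Sigma_{f_N}(\zeta))\big)$, which by ($\Sigma.1$) satisfies $\gamma(0)=(t_0,s_0)$; write $t(\zeta),s(\zeta)$ for its components, so that $t'(0),s'(0)$ are the derivatives appearing in ($\Sigma.2$)--($\Sigma.3$).

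First I would assemble a consistent Cauchy strip along $\gamma$. On it I prescribe $w=R(\zeta)$, with the natural normalization $R(0)=r_0$, and initial momenta $(p(0),q(0))=(p_0,q_0)$. Evaluating $G$ at $\zeta=0$ gives $\widehat F(t_0,s_0,r_0,\tau_0)=F(x_0,z_0,r_0,\tau_0)=0$, which is exactly the compatibility hypothesis \eqref{Equation:Theorem:LocalSolvability1}, while the strip relation $R'(0)=p_0\,t'(0)+q_0\,s'(0)$ is precisely ($\Sigma.3$). To extend the momenta to a full strip I solve, for $(p(\zeta),q(\zeta))$, the system made of $G=0$ together with $R'(\zeta)=p\,t'(\zeta)+q\,s'(\zeta)$; the implicit function theorem makes this possible near $\zeta=0$ as soon as the Jacobian $G_p\,s'-G_q\,t'$ is nonzero there, i.e. as soon as $\gamma$ is non-characteristic.

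This non-characteristic condition is the step I expect to demand the most care, since it is where ($\Sigma.2$) and \eqref{Equation:Theorem:NontrivialCondition} must be matched to the geometry. Differentiating $G$ gives the characteristic direction
\[
(G_p,G_q)=2\,\widehat F_\tau\Big(-a_{f_L}(t)\,p,\ \tfrac{a_{f_N}(s)}{\widehat\alpha^2(t)}\,q\Big),
\]
and since $\widehat F_\tau=\partial F/\partial\tau\neq0$ near the base point by \eqref{Equation:Theorem:NontrivialCondition}, the transversality of $\gamma$ to the characteristics at $\zeta=0$ amounts to the nonvanishing of $t'(0)G_q-s'(0)G_p$. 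Up to the nonzero factor $2\widehat F_\tau$ and the metric weights this is the non-degeneracy recorded in ($\Sigma.2$); checking that this weighted determinant is indeed controlled by $q_0\,t'(0)-p_0\,s'(0)\neq0$ is the genuine point to verify, and it simultaneously guarantees both the solvability of the momentum system above and the local invertibility of the characteristic flow.

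With a non-characteristic strip in hand, the remainder is routine. I would integrate the Charpit--Lagrange system
\[
\dot t=G_p,\quad \dot s=G_q,\quad \dot w=p\,G_p+q\,G_q,\quad \dot p=-(G_t+p\,G_w),\quad \dot q=-(G_s+q\,G_w),
\]
with initial data $(\gamma(\zeta),R(\zeta),p(\zeta),q(\zeta))$. Smooth dependence on initial conditions together with the non-characteristic condition makes $(\zeta,\lambda)\mapsto(t,s)$ a local diffeomorphism at the origin; inverting it yields a solution $w$ of \eqref{Problem:Main2Dimensions} of the regularity permitted by the $C^2$ datum $F$, with $w(\gamma(\zeta))=R(\zeta)$. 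Finally, since $t_0$ and $s_0$ are regular values, $f_L$ and $f_N$ are submersions near $L_0$ and $N_0$, so $u:=w(f_L,f_N)$ is well defined and $(f_L\times f_N)$-invariant on an invariant neighborhood of $L_0\times N_0$; by Proposition~\ref{Proposition:TwoDimensional} it solves \eqref{Proble:2partial}, and $u(\Sigma_{f_L}(\zeta),\Sigma_{f_N}(\zeta))=w(\gamma(\zeta))=R(\zeta)$ furnishes the prescribed behavior.
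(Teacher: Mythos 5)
Your overall route is the same as the paper's: use Proposition~\ref{Proposition:TwoDimensional} to reduce \eqref{Proble:2partial} to a single fully nonlinear first-order PDE in $(t,s)$, set up a Cauchy strip along the curve $\gamma(\zeta)=\big(C_{f_L}(\Sigma_{f_L}(\zeta)),C_{f_N}(\Sigma_{f_N}(\zeta))\big)$, and invoke the classical characteristics theory. The one structural difference is that the paper first replaces $f_L,f_N$ by the induced distance functions $d_L,d_N$, which normalizes $a_{f_L}\equiv a_{f_N}\equiv 1$ before forming the Hamiltonian; you keep the general weights in $G$. That normalization is not cosmetic: it is the paper's device for relating the non-characteristic determinant to condition $(\Sigma.2)$, and that relation is exactly the step you leave open.

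Concretely, the gap is the one you flag and then do not close. With your $G$, the transversality determinant at $\zeta=0$ is
\[
t'(0)\,G_q-s'(0)\,G_p
=2\,\widehat F_\tau\left(\tfrac{a_{f_N}(s_0)}{\widehat\alpha^{2}(t_0)}\,q_0\,t'(0)+a_{f_L}(t_0)\,p_0\,s'(0)\right),
\]
and this is \emph{not} a nonzero multiple of $q_0\,t'(0)-p_0\,s'(0)$: both the weights $a_{f_L}(t_0)$ and $a_{f_N}(s_0)/\widehat\alpha^{2}(t_0)$ and the relative sign differ from $(\Sigma.2)$. (For instance, with all weights equal to $1$, $p_0=q_0=1$, $t'(0)=1$, $s'(0)=-1$, condition $(\Sigma.2)$ holds while the determinant vanishes.) So ``up to the nonzero factor and the metric weights'' is doing real, and in fact unachievable, work as stated: $(\Sigma.2)$ does not by itself deliver that $\gamma$ is non-characteristic for $G$, which is what both the solvability of your momentum system and the invertibility of the characteristic flow rest on. To repair this you must either reparametrize by the distance functions as the paper does (reducing the weights to $1$ and $\widehat\alpha^{-2}(t_0)$, after which the sign pattern still has to be reconciled with $(\Sigma.2)$) or reinterpret $(\Sigma.2)$ directly as transversality of $\gamma$ to the projected characteristic direction $(G_p,G_q)$ and verify it in those terms. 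The remaining steps --- the strip construction from $(\Sigma.3)$ with the normalization $R(0)=r_0$, the compatibility from \eqref{Equation:Theorem:LocalSolvability1}, the Charpit system, and the pullback by $(f_L,f_N)$ --- are correct and match the paper.
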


We prove this Theorem in Section \ref{Subsection:DistanceFunctions}. In the same section, we give a concrete example in the case of the de Sitter space in order to sketch the idea behind the method.

Our paper is organized as follows. In Section \ref{Section:OneDimensional} we prove Theorem \ref{Theorem:OneDimensionalReduction}, give  concrete examples where the one dimensional reduction method holds true and prove Corollary \ref{Corollary:Eikonal}. In Section \ref{Subsection:Computations}, we compute several identities involving norms of gradients of functions defined in warped products of the form \eqref{Eq:WarpedProduct} and prove Proposition \ref{Proposition:TwoDimensional}. Finally, in Section \ref{Subsection:DistanceFunctions}, we prove Theorem \ref{Theorem:TwoDimensional} and give a concrete example of the two-dimensional reduction method. In this same section, in order to simplify equation \eqref{Problem:Main2Dimensions}, we briefly introduce the theory of distance functions induced by transnormal functions on Riemannian manifolds.

\section{The one dimensional reduction method}\label{Section:OneDimensional}

In this section, we prove Theorem \ref{Theorem:OneDimensionalReduction} and apply it in several examples.

\bigskip
\noindent\emph{Proof of Theorem \ref{Theorem:OneDimensionalReduction}.}
Let $(x_0,r_0,p_0)\in M_0\times\mathbb{R}\times\left(\mathbb{R}\smallsetminus\{0\}\right)$ be a point satisfying the hypothesis \eqref{Equation:Theorem:ImplicitFunctionTheoremHypothesis}. Without loss of generality, suppose that $p_0>0$, so that $a(t_0)>0$. By Proposition \ref{Proposition:OneDimensionalReduction}, solving \eqref{Problem:MainEquation} is equivalent to solving \eqref{Problem:MainOneDimensionalEquation}. The hypotheses imply that
\begin{equation*}
\widehat{F}(t_0,r_0,p_0) = 0, \quad \text{and}\quad  \frac{\partial \widehat{F}}{\partial p} (t_0,r_0,p_0)\neq 0.
\end{equation*}
By the Implicit Function Theorem there exists an open set $U\subset\mathbb{R}^2$ and a $C^1$ function $H\colon U\rightarrow\mathbb{R}$ such that $H(t_0,r_0)=p_0$, $\widehat{F}(t,r,H(t,r))=0$ and $H(t,r)>0$ for any $(t,r)\in U$. 
Consider now the Cauchy problem
\begin{equation*}
w'(t) = \sqrt{\frac{H(t,w(t))}{a(t)}},\qquad w(t_0)=r_0.
\end{equation*}
As $H$ and $a$ have the same sign near $t_0$, and as $a(t_0)\neq 0$, by the Theorem of Existence and Uniqueness of solutions to initial value ODEs, there exists $\varepsilon>0$ and a unique $C^1$ solution $w\colon (t_0-\varepsilon,t_0+\varepsilon)\rightarrow\mathbb{R}$ to this problem.  Hence $a(t)w'(t)^2=H(t,w(t))$ and $w$ solves the problem \eqref{Problem:MainOneDimensionalEquation} in the same interval. By Proposition \ref{Proposition:OneDimensionalReduction}, the function $u:=w\circ f$ is $f$-invariant and satisfies equation \eqref{Problem:MainEquation} in $\Omega:=f^{-1}(t_0-\varepsilon,t_0+\varepsilon)$. By construction, $\Omega$ is an $f$-invariant open set and $M_0\subset\Omega$. \hfill $\square$
\bigskip

In what follows, we develop some examples to sketch the reduction method and see how the local existence holds true.

\begin{example}\label{Example:ToyExampleEuclidian}
Let $M=\mathbb{R}^n_s$ be the semi-Euclidean space of signature $s<n$. For any $A\in\mathrm{Sym}(\mathbb{R}^n_s)$, $a\in\mathbb{R}^n_s$ and $\alpha\in\mathbb{R}$ such that $(A-\alpha I)A=0$, it is shown in \cite[Proposition 2.3]{Hahn1984} that the function
\begin{equation*}
f\colon \mathbb{R}^m_s\rightarrow\mathbb{R},\quad f(x) = \langle Ax,x\rangle + 2\langle a,x \rangle
\end{equation*}
is transnormal, in fact isoparametric, and satisfies that
\begin{equation*}
\langle \nabla f, \nabla f \rangle = 4\alpha f + \beta,
\end{equation*}
where $\beta=4\langle a, a\rangle$.
Consider the eikonal equation \eqref{Probelm:Eikonal} and suppose $U$ is $f$-invariant. Let  $\widehat{U}\colon \mathbb{R}\rightarrow\mathbb{R}$ be a smooth function satisfying $U=\widehat{U}\circ f$. Hence, \eqref{Probelm:Eikonal} reduces to
\begin{equation*}
(4\alpha t+ \beta) \vert w'(t)\vert^2 = \widehat{U}(t),\quad t\in \mathrm{Im}\,f,
\end{equation*}
where $\mathrm{Im}\,f$ depends on the parameters $\alpha$ and $\beta$; see, for instance \cite[Appendix A]{FernandezPalmas2021}. If $\alpha\neq0$, this equation has a unique singularity at $t_\ast:=-\frac{\beta}{4\alpha}$. Note that, for any $t\ne t^*$, $4\alpha t + \beta$ and $\widehat{U}(t)$ must have the same sign. Hence, for any $t\neq t_\ast$, we can write
\begin{equation*}
w'(t) =\pm \sqrt{\frac{\widehat{U}(t)}{4\alpha t + \beta}}.
\end{equation*}
For instance, if $t_0<t_\ast$, we have an explicit expression for $w$ given as follows:
\begin{equation*}
w(t) = \pm \int_{t_0}^t \sqrt{\frac{\widehat{U}(s)}{4\alpha s + \beta}}\  ds,\qquad t\in (-\infty,t_\ast).
\end{equation*}
Hence, we can explicitly define the function $u\colon f^{-1}(-\infty,t_\ast)\rightarrow\mathbb{R}$ by
$u(x)=w\circ f(x)$, which is $f$-invariant by definition. The open set $\Omega:=f^{-1}(-\infty,t_\ast)$ is just an $f$-invariant neighborhood of the regular level set $M_0:=f^{-1}(t_0)$.  \hfill $\square$
\end{example}

\begin{example} \label{Example:CartanMunznerEikonal}
Consider the pseudo-sphere $(\mathbb{S}^n_s,g_0)$ of dimension $n\geq 3$ and signature $0\leq s<n$, and let $f\colon\mathbb{S}^n_s\rightarrow\mathbb{R}$ be an isoparametric function defined by a Cartan-Münzner polynomial in $\mathbb{R}^n_s$, see \cite[Section 2.2]{FernandezPalmas2021}. Thus, $f$ is transnormal and satisfies
\begin{equation*}
\langle \nabla_{g_0} f, \nabla_{g_0} f \rangle_{g_0} = a\circ f,\quad a(t)=\ell^2(1-t^2),
\end{equation*}
where $\ell\in\{1,2,3,4,6\}$ are the number of distinct principal curvatures of the hypersurfaces induced by $f$. In this case, $\mathrm{Im}\,f$ is $[-1,1]$ in the Riemannian case ($s=0$) and could be either $\mathbb{R}$, $(-\infty,-1],(-\infty,1],[-1,\infty)$ or $[1,\infty)$ in the semi-Riemannian case ($s>0$), see \cite[Appendix B]{FernandezPalmas2021}. Let $U\in C^\infty(\mathbb{S}_s^n)$ be $f$-invariant, so that $U=\widehat{U}\circ f$ and suppose further that $\widehat{U}$ is positive in $(-1,1)$ and negative in $\mathbb{R}\smallsetminus[-1,1]$. Then, by Proposition \ref{Proposition:OneDimensionalReduction}, a function $u=w\circ f$ is a solution to  the eikonal equation \eqref{Probelm:Eikonal} in $\mathbb{S}_s^n$ if and only if $w$ satisfies the equation
\begin{equation}\label{Equation:EikonalSphere}
\ell^2(1-t^2)\vert w'(t)\vert^2 = \widehat{U}(t),\qquad t\in\mathrm{Im}\,f.
\end{equation}
Observe that this equation has one or two singularities at $t=\pm 1$,  corresponding to the focal manifolds $M_\pm := f^{-1}(\pm 1)$, where, possibly, one is empty,  depending on $\mathrm{Im}\,f$. We next solve separately the cases $t\in(-1,1)$ and $t\in\mathbb{R}\smallsetminus[-1,1]$.

For $t\in(-1,1)$, consider the change of variables $t=\cos s$ and $v(s):=w(t)$, $s\in(0,\pi)$; then $w$ solves equation \eqref{Equation:EikonalSphere} if and only if $v$ solves
\begin{equation*}
\vert v'(s) \vert^2 = \frac{1}{\ell}\widehat{U}(\cos s),\qquad s\in(0,\pi).
\end{equation*}
As $\widehat{U}>0$ in $[-1,1]$, we can solve explicitly for any $s_0,s\in(0,\pi)$ and obtain
\begin{equation*}
v(s) = \frac{1}{\ell}\int_{s_0}^s\sqrt{\widehat{U}(\cos(\zeta))} d\zeta.
\end{equation*}
Hence, $u=v\circ\arccos f$ solves \eqref{Probelm:Eikonal} in $\Omega:= f^{-1}(-1,1)$ and $M_0:= f^{-1}(t_0)\subset \Omega$, where $t_0=\arccos s_0$.

Next, observe that $w(t)$ is a solution to \eqref{Equation:EikonalSphere} for $t\in(1,\infty)$ if and only if $\widetilde{w}(t)=w(-t)$ is a solution to the same equation for $t\in(-\infty,1)$. So, it suffices to solve equation \eqref{Equation:EikonalSphere} for $t>1$. To do this, consider the change of variables $t=\cosh s$ and let $v(s):=w(t)$, $s\in(0,\infty)$. Then $w$ solves \eqref{Equation:EikonalSphere} in $(1,\infty)$ if and only if $v$ solves
\begin{equation*}
\vert v'(s)\vert^2 = -\frac{1}{\ell^2}\widehat{U}(\cosh s),\qquad s\in(0,\infty).
\end{equation*}
As $\widehat{U}<0$ in $(1,\infty)$, this equation can be solved explicitly for any $s_0,s\in(0,\infty)$, giving that
\begin{equation*}
v(s) = \frac{1}{\ell}\int_{s_0}^s\sqrt{-\widehat{U}(\cosh \zeta)}\;d\zeta.
\end{equation*}
This yields the $f$-invariant solution $u:=v\circ\arccosh f$, defined in $\Omega:= f^{-1}(1,\infty)$, which is constant on the hypersurface $M_0=f^{-1}(t_0)$, with $t_0:=\arccosh s_0$. \hfill$\square$
\end{example}

\begin{example}\label{Example:GammaInvariantEikonal}
Let $(M,g)$ be a closed Riemannian manifold and let $\Gamma$ be a closed subgroup of $\mathrm{Isom}(M,g)$. By the theory of cohomogeneity one actions (Cf. \cite[Section 4]{FernandezPalmasTorres2023}), there are exactly two singular orbits $\Gamma x_1$ and $\Gamma x_2$, for some $x_1,x_2\in M$. If  $\mathrm{dist}_g$ denotes the geodesic distance in $M$, the distance function $d_\Gamma:M\rightarrow\mathbb{R}$ given by
\begin{equation*}
d_\Gamma(x):=\mathrm{dist}_g(x,\Gamma x_1)
\end{equation*}
is constant along each $\Gamma$-orbit and satisfies
\begin{equation*}
\langle \nabla_g d_\Gamma, \nabla_g d_\Gamma\rangle_g = 1,\quad \text{in } M\smallsetminus\mathcal{Z},\quad \text{and}\quad \mathrm{Im}\,d_\Gamma = [0,\delta_\Gamma]
\end{equation*}
with $\mathcal{Z}:=\Gamma x_1 \cup \Gamma x_2$ and $\delta_\Gamma := \mathrm{dist}_g(\Gamma x_1,\Gamma x_2)$. Hence, for any smooth and positive $\Gamma$-invariant function $U:M\rightarrow\mathbb{R}$, by Proposition \ref{Proposition:OneDimensionalReduction}, a $\Gamma$-invariant function $u:M\smallsetminus\mathcal{Z}\rightarrow\mathbb{R}$ of the form $u=w\circ d_\Gamma$ is a solution to the eikonal equation 
\begin{equation}\label{Equation:EikonalGammaInvariant}
\vert \nabla_g u\vert_g = U, \qquad \text{on }M\smallsetminus\mathcal{Z}
\end{equation}
if and only if $w$ is a solution to the problem
\begin{equation*}
\vert w'(t)\vert^2 = \widehat{U}(t) \qquad \text{in }(0,\delta_\Gamma).
\end{equation*}
where $\widehat{U}:[0,\delta_\Gamma]\rightarrow\mathbb{R}$ is smooth and such that $U=\widehat{U}\circ d_{\gamma}$. As $U>0$, $\widehat{U}$ is also positive and for any $t_0,t\in(0,\delta_\Gamma)$, the solution to this problem is given explicitly by
\begin{equation*}
w(t):=\int_{t_0}^t \sqrt{\widehat{U}(\zeta)}\;d\zeta.
\end{equation*}
Thus, the function $u:=w\circ d_\Gamma$ is a $\Gamma$-invariant function satisfying equation \eqref{Equation:EikonalGammaInvariant} in $d_\Gamma^{-1}(0,\delta_\Gamma)=M\smallsetminus\mathcal{Z}$. \hfill $\square$
\end{example}

\begin{example} \label{Example:HyperbolicOneDimensional}
Consider the Riemannian hyperbolic space $\mathbb{H}^n$ with its canonical metric $g_0$. Following Miyaoka's example given in \cite[Theorem 1.5]{Miyaoka2013}, we construct a transnormal, non-isoparametric function with foliation $\mathcal{F}_{f}$. To this end, take any totally geodesic $\mathbb{H}^{n-1}$ in $\mathbb{H}^n$ and let $M_0$ be a slight deformation of $\mathbb{H}^{n-1}$ in such a way that the principal curvatures $\lambda_i\colon M_0\rightarrow\mathbb{R}$ are not constant and $\vert \lambda_i\vert<1$, $i=1,\ldots,n-1.$ Then, there exists a transnormal function $f\colon \mathbb{H}^{n-1}\rightarrow\mathbb{R}$ such that $M_0\in\mathcal{F}_f$, $\mathrm{Im}\,f =\mathbb{R}$, all the leaves of $\mathbb{F}_f$ are regular level sets of $f$ and 
\begin{equation*}
\vert \nabla_{g_0} f(x)\vert_{g_0} = 1\qquad \text{for every }x\in\mathbb{H}^n.
\end{equation*}
Hence, as in Example \ref{Example:GammaInvariantEikonal}, if $M_0=f^{-1}(t_0)$ for some $t_0\in\mathbb{R}$, the function $u\colon\mathbb{H}^n\rightarrow\mathbb{R}$ given by $u:= w\circ f$, where
\begin{equation*}
w(t):=\int_{t_0}^t\sqrt{\widehat{U}(\zeta)} d\zeta 
\end{equation*}
is a globally defined $f$-invariant solution to the eikonal equation 
\begin{equation*}
\vert \nabla_{g_0}u\vert_{g_0} = U\qquad \text{in }\mathbb{H}^n,
\end{equation*}
for any $f$-invariant function $U=\widehat{U}\circ f$, with $\widehat{U}\colon\mathbb{R}\rightarrow\mathbb{R}$ positive. In particular, $u$ is constant on $M_0$ and $M_0$ is not isoparametric, because isoparametric hypersurfaces in $\mathbb{H}^n$ have constant principal curvatures, see, for instance \cite[Corollary 3.7]{CecilRyanBook}. \hfill $\square$
\end{example}
\bigskip 

\noindent\emph{Proof of Corollary \ref{Corollary:Eikonal}.}
Just fix $s=0$ in Example \ref{Example:CartanMunznerEikonal} and take $U\equiv c> 0$ in Examples \ref{Example:CartanMunznerEikonal}, \ref{Example:GammaInvariantEikonal}  and \ref{Example:HyperbolicOneDimensional}. \hfill$\square$

\section{The two dimensional reduction method}\label{Section:TwoDimensional}

\subsection{Some computations on warped products}\label{Subsection:Computations}

In this section, we prove Proposition \ref{Proposition:TwoDimensional}, beginning with some elementary computations for the warped product manifold $(L^s\times_\alpha N^k, g= -g_L + \alpha^2 g_N).$ First, notice that in local coordinates, the metric $g$ and its inverse have the following expressions
\begin{equation}\label{coef:met}
 		 g_{ij}=\begin{cases}
 		-(g_L)_{ij}, & 1\leq i,j\leq s,\\
 		\alpha^2(g_N)_{ij}, & s+1\leq i,j\leq s+k,\\
 		0, &\text{otherwise},
 	\end{cases} 
 	\end{equation}
and 
 \begin{equation}\label{coef:metinv}
 	g^{ij}=\begin{cases}
 		-g_L^{ij}, & 1\leq i,j\leq s,\\
 		\alpha^{-2}g_N^{ij}, & s+1\leq i,j\leq s+k,\\
 		0, &\text{otherwise},
 	\end{cases} 
 \end{equation}
respectively. Now, recall that the gradient of a function $u$ is given in coordinates $y=(x,z)$ as
\begin{equation*}
\nabla_g u=\sum_j\left(\sum_k g^{jk}\partial_{y_k}u\right)\partial_{y_j}.
\end{equation*}

 


 
We have the following computations for the gradient and its interior products.

  \begin{lemma}
  	For any $u\in C^{\infty}(L\times N)$ we have that
   \begin{align*}
       \nabla_g u&=-\nabla_{g_L} u+\af^{-2}\nabla_{g_N} u 
       , \\
  		\langle \nabla_g u,\nabla_g u\rangle_g &=-|\nabla_{g_L} u|^2_{g_L}+\af^{-2}|\nabla_{g_N} u|^2_{g_N}.
   \end{align*}
  \end{lemma}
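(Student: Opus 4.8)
The plan is to treat both identities as a single coordinate computation, reading everything off the block form of the inverse metric recorded in \eqref{coef:metinv}. Fix local product coordinates $y=(x,z)$ on $L\times N$, where $x=(y_1,\dots,y_s)$ are coordinates on $L$ and $z=(y_{s+1},\dots,y_{s+k})$ are coordinates on $N$. Throughout, $\nabla_{g_L}u$ and $\nabla_{g_N}u$ denote the \emph{partial} gradients, that is, the gradients of $u$ taken in the $L$- and $N$-directions with the complementary variable frozen; these are the natural objects appearing on the right-hand sides, and a harmless abuse of notation identifies them with their images in $T(L\times N)$.

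For the first identity I would start from the coordinate expression $\nabla_g u=\sum_j\bigl(\sum_k g^{jk}\partial_{y_k}u\bigr)\partial_{y_j}$ and split the double sum according to whether the indices $j,k$ both lie in $\{1,\dots,s\}$ (the $L$-block) or both lie in $\{s+1,\dots,s+k\}$ (the $N$-block). Since \eqref{coef:metinv} shows that $g^{jk}=0$ whenever $j$ and $k$ belong to different blocks, there are no mixed terms and the sum decouples into two pieces. On the $L$-block $g^{jk}=-g_L^{jk}$, so that piece equals $-\sum_{j,k\le s}g_L^{jk}(\partial_{y_k}u)\partial_{y_j}=-\nabla_{g_L}u$; on the $N$-block $g^{jk}=\af^{-2}g_N^{jk}$, so that piece equals $\af^{-2}\nabla_{g_N}u$. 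Adding the two yields $\nabla_g u=-\nabla_{g_L}u+\af^{-2}\nabla_{g_N}u$.

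For the second identity I would simply pair this expression with itself using $g$, exploiting that $-\nabla_{g_L}u\in TL$ and $\af^{-2}\nabla_{g_N}u\in TN$ are $g$-orthogonal (again because the off-diagonal blocks of $g_{ij}$ in \eqref{coef:met} vanish). Using $g|_{TL}=-g_L$ and $g|_{TN}=\af^2 g_N$, the $TL$-term contributes $-g_L(\nabla_{g_L}u,\nabla_{g_L}u)=-|\nabla_{g_L}u|_{g_L}^2$, while the $TN$-term contributes $\af^2 g_N(\af^{-2}\nabla_{g_N}u,\af^{-2}\nabla_{g_N}u)=\af^{-2}|\nabla_{g_N}u|_{g_N}^2$, and summing gives the claim. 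There is no genuine obstacle here: the argument is routine once the block-diagonal structure is observed, and the only points requiring a little care are the bookkeeping of the powers of $\af$ (the factor $\af^{-2}$ carried by the gradient is partly cancelled by the $\af^2$ carried by $g$ on the $N$-factor) and the identification of the partial gradients with vector fields on the product.
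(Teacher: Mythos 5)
Your argument is correct and follows essentially the same route as the paper's proof: both read the first identity off the block-diagonal form of $g^{ij}$ in local product coordinates, and both obtain the second by pairing the resulting expression with itself and using that the $L$- and $N$-blocks are $g$-orthogonal with $g|_{TL}=-g_L$ and $g|_{TN}=\af^2 g_N$. The only difference is cosmetic: the paper writes out the gradients in components $a_i\partial_{x_i}$ and $b_i\partial_{z_i}$ before contracting, whereas you phrase the same cancellation of $\af^{-4}\cdot\af^2=\af^{-2}$ directly.
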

  \begin{proof}
  	For a point $y=(x,z) $ in a coordinate chart, from \eqref{coef:met} and \eqref{coef:metinv}, we have that
  	\begin{align*}
  		\nabla_g u&=\sum_j^{s+k}\left(\sum_i^{s+k}g^{ji}\partial_{y_i}u\right)\partial_{y_j}\\
&=\sum_{j=1}^s\left(\sum_{i=1}^{s+k}g^{ji}\partial_{y_i}u\right)\partial_{x_j}+
\sum_{j=s+1}^{s+k}\left(\sum_{i=1}^{s+k}g^{ji}\partial_{y_i}u\right)\partial_{z_j}\\
  	 	 &=\sum_{j=1}^s\left(\sum_{i=1}^{s}-g_L^{ji}\partial_{x_i}u\right)\partial_{x_j}+
  	 	 \sum_{j=s+1}^{s+k}\left(\sum_{i=s+1}^{s+k}\af^{-2}g_N^{ji}\partial_{z_i}u\right)\partial_{z_j}\\
  	 	 &=-\nabla_{g_L}u+\af^{-2}\nabla_{g_N}u.
  	\end{align*}
  Since $\nabla_{g_L}u\in \Gamma(TL)$ and $\nabla_{g_N}u\in \Gamma(TN)$, if $\nabla_{g_L}u=\sum_{i=1}^sa_i\partial_{x_i}$ and $\nabla_{g_N}u=\sum_{i=s+1}^{s+k}b_i\partial_{z_i}$, for some smooth coefficients $a_i$ and $b_i$, then
  \begin{align*}
  	\langle\nabla_g u,&\nabla_g u\rangle_g=\langle -\nabla_{g_L}u+\af^{-2}\nabla_{g_N}u,-\nabla_{g_L}u+\af^{-2}\nabla_{g_N}u  \rangle\\
  	&=\langle \nabla_{g_L}u,\nabla_{g_L}u\rangle_g+2\langle-\nabla_{g_L}u,\af^{-2}\nabla_{g_N}u \rangle +\langle\af^{-2}\nabla_{g_N}u,\af^{-2}\nabla_{g_N}u \rangle\\
&=\left\langle\sum_{i=1}^sa_i\partial_{x_i},\sum_{i=1}^sa_i\partial_{x_i}\right\rangle_g+\af^{-4}\left\langle\sum_{i=s+1}^{s+k}b_i\partial_{z_i},\sum_{i=s+1}^{s+k}b_i\partial_{z_i} \right\rangle_g\\
  	&=\sum_{i,j=1}^s a_ia_j\langle{\partial }_{x_i},{\partial}_{x_j}\rangle_g+\af^{-4}\sum_{i,j=s+1}^{s+k}b_ib_j\langle{\partial }_{z_i},{\partial}_{z_j}\rangle_g\\
  &	=-|\nabla_{g_L}u|^2+\af^{-2}|\nabla_{g_N}u|^2.
  \end{align*}\qed
  \end{proof}
  


For later use, we compute the norms of the gradients of $(f_L\times f_N)$-invariant functions, not necessarily transnormal. In what follows, $t:=f_L(x), s:=f_N(z).$

  \begin{lemma}
  Let $w\in C^1(\mathbb{R}^2)$ and let $f_L\colon L\rightarrow \mathbb{R}$ and $f_N\colon N\rightarrow\mathbb{R}$ be smooth. If $u:=w(f_L,f_N)$, then
  		\begin{equation*}  			|\nabla_{g_L}u|^2_{g_L}=w_t^2(f_L,f_N)|\nabla_{g_L}f_L|^2_{g_L},\quad\text{and}\quad |\nabla_{g_N}u|^2_{g_N}=w_{s}^2(f_L,f_N)|\nabla_{g_L}f_L|^2_{g_L}.
  		\end{equation*} 
  \end{lemma}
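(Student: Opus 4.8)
The plan is to reduce both identities to the chain rule for the gradient of a composition, exploiting the decisive fact that $f_L$ depends only on the $L$-factor while $f_N$ depends only on the $N$-factor. I would work in a local coordinate chart $x=(x_1,\dots,x_s)$ on $L$, where $\nabla_{g_L}$ is the intrinsic gradient on $(L,g_L)$. By the chain rule applied to $u=w(f_L,f_N)$, for each index $i$ we have
\begin{equation*}
\partial_{x_i} u = w_t(f_L,f_N)\,\partial_{x_i} f_L + w_s(f_L,f_N)\,\partial_{x_i} f_N.
\end{equation*}
The single observation that does all the work is that the second term vanishes: since $f_N$ is a function on $N$ alone, it is constant along every $L$-direction, so $\partial_{x_i} f_N = 0$. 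Hence $\partial_{x_i} u = w_t(f_L,f_N)\,\partial_{x_i} f_L$.

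Next I would insert this into the coordinate formula for the gradient on $(L,g_L)$, namely $\nabla_{g_L} u=\sum_j\left(\sum_i g_L^{ij}\,\partial_{x_i}u\right)\partial_{x_j}$. Because the scalar factor $w_t(f_L,f_N)$ is independent of the summation indices, it factors out, giving
\begin{equation*}
\nabla_{g_L} u = w_t(f_L,f_N)\sum_j\left(\sum_i g_L^{ij}\,\partial_{x_i} f_L\right)\partial_{x_j} = w_t(f_L,f_N)\,\nabla_{g_L} f_L.
\end{equation*}
Taking the $g_L$-norm squared of both sides yields the first identity, $|\nabla_{g_L} u|^2_{g_L} = w_t^2(f_L,f_N)\,|\nabla_{g_L} f_L|^2_{g_L}$. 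The second identity follows by the identical argument with the roles of $L$ and $N$ interchanged: differentiating with respect to the $N$-coordinates $z=(z_1,\dots,z_k)$ annihilates the $f_L$-term, since $f_L$ is constant along the $N$-directions, leaving $\nabla_{g_N} u = w_s(f_L,f_N)\,\nabla_{g_N} f_N$ and hence $|\nabla_{g_N} u|^2_{g_N} = w_s^2(f_L,f_N)\,|\nabla_{g_N} f_N|^2_{g_N}$.

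I expect no genuine obstacle: once one recognizes that $\nabla_{g_L}$ and $\nabla_{g_N}$ are the partial gradients intrinsic to each factor, the whole computation rests on the vanishing of the cross derivatives $\partial_{x_i} f_N$ and $\partial_{z_j} f_L$. The only points requiring a little care are the bookkeeping of which argument ($t$ or $s$) the partial derivative of $w$ refers to, and the interpretation of the right-hand side of the second identity, which by the symmetry of the argument should read $|\nabla_{g_N} f_N|^2_{g_N}$ (the norm of the gradient of $f_N$ on $N$) rather than $|\nabla_{g_L} f_L|^2_{g_L}$.
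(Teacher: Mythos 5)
Your proof is correct and follows essentially the same route as the paper's: apply the chain rule, observe that the cross derivatives $\partial_{x_i}f_N$ and $\partial_{z_j}f_L$ vanish, factor the scalar $w_t$ (resp.\ $w_s$) out of the coordinate expression for the gradient, and take norms. You are also right that the second identity as printed contains a typo and should read $|\nabla_{g_N}u|^2_{g_N}=w_s^2(f_L,f_N)\,|\nabla_{g_N}f_N|^2_{g_N}$, which is the form actually used in Corollary~\ref{Corollary:GradientTransnormal}.
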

  
 \begin{proof}
 	In local coordinates $y=(x,z)$, the differential of $u=w( f_L,f_N)$ is given by 
\begin{align*}
	 d(u)&=d(w\circ (f_L,f_N)) =dw(f_L,f_N)\circ d(f_L\times f_N)\\
	 	 &=dw\circ (df_L\times df_N) \\
	 	 &=\big(w_t(f_L,f_N),w_s(f_L,f_N)\big)\begin{bmatrix}
	 	 	\nabla_{g_L}f_L&0\\0&\nabla_{g_N}f_N
	 	 \end{bmatrix}\\
 	 	 &=\big(w_t(f_L,f_N)\nabla_{g_L}f_L,w_{s}(f_L,f_N)\nabla_{g_N}f_N\big).
\end{align*}
Hence
\begin{equation*} \partial_{x_i}u =w_t(f_L,f_N)\partial_{x_i}f_L,
\quad \text{and} \quad	 \partial_{z_i}u =w_s(f_L,f_N)\partial_{z_i}f_N.
\end{equation*}
Therefore, 
\begin{align*}
	\nabla_{g_L}(w(f_L,f_N))&=\sum_{j=1}^{s}\left(\sum_{i=1}^{s}g_L^{ij}\partial_{x_i}(w(f_L,f_N))\right)\partial_{x_j}\\
	&=\sum_{j=1}^{s}\left(\sum_{i=1}^{s}g_L^{ij}w_t(f_L,f_N)\partial_{x_i}f_L\right)\partial_{x_j}\\
	&=w_t(f_L,f_N)\sum_{j=1}^{s}\left(\sum_{i=1}^{s}g_L^{ij}\partial_{x_i}f_L\right)\partial_{x_j}\\
	&=w_t(f_L,f_N)\nabla_{g_L}f_L,
\end{align*}
where we conclude that
\begin{align*}
	|\nabla_{g_L}u|^2&=\langle w_t(f_L,f_N)\nabla_{g_L}f_L,w_t(f_L,f_N)\nabla_{g_L}f_L\rangle_{g_L}\\
	&=w_t^2(f_L,f_N)|\nabla_{g_L}f_L|^2_{g_L}.
\end{align*}

Similar computations give
\begin{equation*}
\nabla_{g_N}u=w_s(f_L,f_N)\nabla_{g_N}f_N \quad\text{ and }\quad	|\nabla_{g_L}u|^2=w_s^2(f_L,f_N)|\nabla_{g_L}f_L|^2_{g_L},
\end{equation*}
as we wanted.
\qed
 \end{proof}

Now suppose that $f_L\colon L\to\R$ and $f_N\colon N\to\R$ are transnormal functions satisfying
\begin{equation*}
 |\nabla_{g_L}f_L|_{g_L}^2 =a_L\circ f_L \quad\text{and}\quad
 |\nabla_{g_N}f_N|_{g_N}^2 =a_N\circ f_N,
\end{equation*}
for some smooth functions $a_L,a_N\colon \R\to\R$. We have the following immediate consequence of the previous two lemmas.

\begin{corollary}\label{Corollary:GradientTransnormal}
	For $f_L,f_N$ transnormal, $u=(f_L,f_N)$ and $\af=\widehat{\alpha}\circ f_L$,
	\begin{equation*}
		\langle \nabla_g u,\nabla_g u\rangle_g=-w_t^2(f_L,f_N)a_L\circ f_L+\frac{a_N\circ f_N}{\widehat{\alpha}^2\circ f_L}w_{s}^2(f_L,f_N).
	\end{equation*}
\end{corollary}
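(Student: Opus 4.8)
The plan is to combine the two preceding lemmas directly. The first lemma gives the decomposition of the ambient gradient's square norm in the warped product as
\begin{equation*}
\langle \nabla_g u,\nabla_g u\rangle_g = -|\nabla_{g_L}u|^2_{g_L} + \widehat{\alpha}^{-2}|\nabla_{g_N}u|^2_{g_N},
\end{equation*}
where the warping factor $\alpha = \widehat{\alpha}\circ f_L$ has been substituted. The second lemma, applied to $u = w(f_L,f_N)$, expresses each factor norm in terms of the partial derivatives of $w$ and the norms of the gradients of $f_L$ and $f_N$:
\begin{equation*}
|\nabla_{g_L}u|^2_{g_L} = w_t^2(f_L,f_N)\,|\nabla_{g_L}f_L|^2_{g_L},\qquad |\nabla_{g_N}u|^2_{g_N} = w_s^2(f_L,f_N)\,|\nabla_{g_N}f_N|^2_{g_N}.
\end{equation*}
So the first step is simply to insert these two expressions into the formula from the first lemma.

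Next I would invoke the transnormality hypotheses $|\nabla_{g_L}f_L|^2_{g_L} = a_L\circ f_L$ and $|\nabla_{g_N}f_N|^2_{g_N} = a_N\circ f_N$, which convert the gradient norms of $f_L$ and $f_N$ into functions of $f_L$ and $f_N$ alone. Substituting these yields
\begin{equation*}
\langle \nabla_g u,\nabla_g u\rangle_g = -w_t^2(f_L,f_N)\,(a_L\circ f_L) + \frac{a_N\circ f_N}{\widehat{\alpha}^2\circ f_L}\,w_s^2(f_L,f_N),
\end{equation*}
which is precisely the claimed identity. This is essentially a one-line substitution once the lemmas are in hand.

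There is no real obstacle here, since the corollary is a formal consequence of the two lemmas plus the defining property of transnormal functions; the only point requiring minor care is bookkeeping with the warping factor. One should note that in the statement of the second lemma there appears to be a typographical conflation, with $|\nabla_{g_L}f_L|^2_{g_L}$ written where $|\nabla_{g_N}f_N|^2_{g_N}$ is intended in the $N$-factor; I would use the correct $N$-gradient norm in the $s$-derivative term so that the transnormality relation $a_N\circ f_N$ appears, consistent with the final formula. Thus the proof reduces to: write out the first lemma, substitute the second lemma's factor-norm identities, and replace each factor norm by its transnormal value $a_L\circ f_L$ and $a_N\circ f_N$ respectively, arriving at the stated expression.
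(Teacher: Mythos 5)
Your proposal is correct and coincides with the paper's own treatment: the corollary is stated there as an immediate consequence of the two preceding lemmas, obtained exactly by substituting the factor-norm identities into the warped-product gradient formula and then applying the transnormality relations. Your observation about the typographical slip in the second lemma (the $N$-factor norm should read $|\nabla_{g_N}f_N|^2_{g_N}$) is also accurate and handled correctly.
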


\bigskip

\noindent\emph{Proof of Proposition \ref{Proposition:TwoDimensional}}.
Using the identity in Corollary \ref{Corollary:GradientTransnormal}, the proof is straightforward.\hfill$\square$

\subsection{Reduction using distance functions}\label{Subsection:DistanceFunctions}

In this subsection, we simplify   
\eqref{Problem:Main2Dimensions} via distance functions, and prove Theorem \ref{Theorem:TwoDimensional}.  By the theory of transnormal functions for Riemannian manifolds (see \cite{Miyaoka2013} and \cite{Wang1987}), the connected components of the critical sets of $f_L$ and $f_N$ are submanifolds of $L$ and $N$ respectively, called \emph{focal submanifolds}. There are, at most, two focal submanifolds for a given transnormal function, and, in some cases, this set is empty. In what follows, we will suppose further that $f_L$ and $f_N$ admit, at least, one focal manifold, say $L_0$ and $N_0$, and we will denote by $L_1$ and $N_1$ the second, possibly empty, focal submanifold. In this case, we can define the distance functions $d_L\colon L\rightarrow\mathbb{R}$ and $d_N\colon N\rightarrow\mathbb{R}$ given by
\begin{equation*}
d_L(x):=\mathrm{dist}_{g_L}(x,L_0)\quad\text{and}\quad d_N(y):=\mathrm{dist}_{g_N}(y,N_0).
\end{equation*}
Let $\delta_L,\delta_N\in(0,\infty]$ be the supremum of these functions in $L$ and $N$, respectively. The case of infinite distance corresponds to the non-compact case of $L$ or $N$, while, in the case of compact manifolds, $\delta_L=\mathrm{dist}_{g_L}(L_1,L_0)$ and $\delta_N=\mathrm{dist}_{g_N}(N_1,N_0)$. Moreover, for any $x\in L\smallsetminus(L_0\cup L_1)$ and any $z\in N\smallsetminus(N_0\cup N_1)$, the distance functions satisfy
\begin{equation*}
\langle \nabla_{g_L} d_L(x) , \nabla_{g_L} d_L (x) \rangle_{g_L} = 1 \quad\text{and}\quad \langle \nabla_{g_N} d_N(z) , \nabla_{g_N} d_N (z) \rangle_{g_N} = 1
\end{equation*}
and, so, they are transnormal in $ L\smallsetminus(L_0\cup L_1)$ and in $N\smallsetminus(N_0\cup N_1)$ respectively, with $a_{d_L}=1$ and $a_{d_N}=1$, and $I_{d_L}=[0,\delta_L]$ and $I_{d_N}=[0,\delta_N]$.

Now, observe that $d_L$ and $f_L$ have the same level sets and, hence, $f_L$-invariant functions are $d_L$-invariant functions and vice-versa. The same is true for $d_N$ and $f_N$. In fact, $d_L$ and $d_N$ are just reparametrizations of $f_L$ and $f_N$. For this reason, and without loss of generality, we will consider directly that $F$ is $(d_L\times d_N)$-invariant in its first variable and that $\alpha$ is $d_L$-invariant. Moreover, since $\mathcal{F}_{f_L}=\mathcal{F}_{d_L}$ and $\mathcal{F}_{f_N}=\mathcal{F}_{d_N}$, the one parameter families of hypersurfaces satisfy that $\Sigma_{f_L}=\Sigma_{d_L}$ and $\Sigma_{f_N}=\Sigma_{d_N}$.
\bigskip

\noindent\emph{Proof of Theorem \ref{Theorem:TwoDimensional}.} By the previous remarks, it suffices to prove the result in the $d_L$ and $d_N$-invariant setting, that is, suppose that $F$ is $(d_L\times d_N)$-invariant in the first variable, and let $\widehat{F}\colon I_{d_L}\times I_{d_N}\times\mathbb{R}^2\rightarrow\mathbb{R}$ such that $F=\widehat{F}(d_L\times d_N,\cdot,\cdot)$ in $L\times N\times\mathbb{R}^2$. In the same way, as $\alpha$ is $f_L$-invariant, it is $d_L$ invariant and $\alpha = \widehat{\alpha}\circ d_L$ in $L\smallsetminus(L_0\cup L_1)$. Accordingly, we consider the one parameter families $\Sigma_{d_L}$ and $\Sigma_{d_N}$ such that $\Sigma_{d_L}(0)=L_0=d_L^{-1}(t_0)$ and $\Sigma_{d_N}(0)=N_0=d_L^{-1}(s_0)$, for regular values $t_0\in (0,\delta_L)$ and $s_0\in(0,\delta_N)$ of $d_L$ and $d_N$, respectively. To simplify notation, let $T:=C_{d_L}\circ\Sigma_{d_L}$ and $S:=C_{d_N}\circ\Sigma_{d_N}$. 

Since $d_L$ and $d_N$ are transnormal in $L\smallsetminus(L_0\cup L_1)$ and $N\smallsetminus (N_0\cup N_1)$, respectively, by Proposition \ref{Proposition:TwoDimensional}, to solve equation \eqref{Problem:MainEquation} with $(d_L\times d_N)$-invariance, it suffices to find $w\in C^1\left((0,\delta_L)\times (0,\delta_N)\right)$ solving the reduced equation
\begin{equation*}
\widehat{F}\left(t,s,w(t,s), -w_t^2(t,s) + \frac{1}{\widehat{\alpha}^2(t)} w_s^2(t,s) \right) = 0,\quad \emph{\text{ in }}\ (0,\delta_L)\times(0,\delta_N).
\end{equation*}
Observe this equation can be written as a first order fully nonlinear PDE of the form
\begin{equation*}
H(t,s,w,w_t,w_s) = 0 \qquad\text{ in }(0,\delta_L)\times(0,\delta_N),
\end{equation*}
where $H\colon (0,\delta_L)\times(0,\delta_N)\times\mathbb{R}^3\to\mathbb R$ is given by
\begin{equation*}
H(t,s,r,p,q)=\widehat{F}\left( t,s,r, -p^2 + \widehat{\alpha}^{-2}(t) q^2 \right).
\end{equation*}
 As 
\begin{equation*}
\frac{\partial H}{\partial p} (t,s,r,p,q) = 2p \frac{\partial \widehat{F}}{\partial \tau}(t,s,r,-p^2+\widehat{\alpha}^{-2}q^2)
\end{equation*}
and
\begin{equation*}
\frac{\partial H}{\partial q} (t,s,r,p,q) = 2q \frac{\partial \widehat{F}}{\partial \tau}(t,s,r,-p^2+\widehat{\alpha}^{-2}q^2),
\end{equation*}
then, for any $(x_0,z_0)\in L_0\times N_0$, by the hypotheses \eqref{Equation:Theorem:LocalSolvability1}, \eqref{Equation:Theorem:NontrivialCondition}, \emph{\hyperref[Equation:Theorem:LocalSolvability2]{$(\Sigma.2)$ }} and \emph{\hyperref[Equation:Theorem:LocalSolvability3]{$(\Sigma.3)$ }}, we have that
\begin{equation*}
H(t_0,s_0,r_0,p_0,q_0) = 0, \qquad p_0T'(0) + q_0 S'(0) = R'(0),
\end{equation*}
that
\begin{equation*}
T'(0) \frac{\partial H}{\partial p} (t_0,s_0,r_0,p_0,q_0) - S'(0) \frac{\partial H}{\partial q} (t_0,s_0,r_0,p_0,q_0) \neq 0,
\end{equation*}
and that 
\begin{equation*}
\left( \frac{\partial H}{\partial p} \right)^2 (t,s,r,p,q) + \left( \frac{\partial H}{\partial q} \right)^2 (t,s,r,p,q) \neq 0
\end{equation*}
in a small neighborhood of $(t_0,s_0,r_0,p_0,q_0)$.  Hence, by the standard theory of first-order fully nonlinear equations (see, for instance, \cite[Section 4.6]{SalsaBook}), the Cauchy problem
\begin{equation*}
\begin{cases}
\widehat{F}\left( t,s,w(t,s),w_t^2(t,s) + \widehat{\alpha}^2w_{s}^2(t,s) \right)=0,\\
w(T(\zeta),S(\zeta))=R(\zeta),
\end{cases}
\end{equation*}
has a unique solution $w$ of class $C^1$ defined in a small neighborhood of $(t_0,s_0)$, and again, using Proposition \ref{Proposition:TwoDimensional}, the function
$u:=w(d_L,d_N)$ is $(d_L\times d_N)$-invariant and satisfies equation \eqref{Proble:2partial} in a small $(f_L\times f_N)$-invariant neighborhood of $L_0\times N_0$, as we wanted to prove. \hfill $\square$
\bigskip

To conclude this paper, let us analyze a simple application of the method in case of the de Sitter space. 

\begin{example}
 Let $n\geq 3$ and let $(\S^n,g_0)$ be the sphere with the usual metric and consider the de Sitter space
 \begin{equation*}
(M,g)=(\R\times \S^n, dt^2+ \cosh^2 t\, g_0).
 \end{equation*}
 For $n_1,n_2\in\N$ such that $n_1+n_2=n+1,\, n_1,n_2\geq2$, take 
\begin{equation*}
\S^n\subset\R^{n+1}=\R^{n_1}\times\R^{n_2}
\end{equation*}
and for $z=(z_1,z_2)\in\S^n$ define the function
\begin{equation*}
f(z)=\arccos(|z_1|^2-|z_2|^2).
\end{equation*}
This function is transnormal and satisfies that $\mathrm{Im}\, f=[0,\pi]$. The sets
\begin{equation*}
f^{-1}(0)=\S^{n_1-1}\times \{0\}\quad\text{and}\quad f^{-1}(\pi)=\{0\}\times\S^{n_2-1}
\end{equation*}
are the focal submanifolds, while 
\begin{equation*}
f^{-1}(s)=\S^{n_1-1}_{\cos s}\times\S^{n_2-1}_{\sin s}
\end{equation*}
are regular hypersurfaces for $s\in(0,\pi)$ and 
\begin{equation*}
|\nabla_{g_0}f|^2_{g_0}=4, \quad \text{ in }\S^n\smallsetminus f^{-1}\{0,\pi\}.
\end{equation*}

For any $(t_0,z_0)\in\mathbb{R}\times\left(\S^n\smallsetminus f^{-1}\{0,\pi\}\right),$ and any smooth function $\widehat{U}\colon\R\times[0,\pi]\to\R$, we will show next that the eikonal equation on $M$
\begin{equation}\label{eikon:examp}
    \langle \nabla_g u,\nabla_g u\rangle=\widehat{U}(t,z),
\end{equation}
has a solution which is $f$-invariant in the $z$-variable and is defined in a neighborhood of $\{t_0\}\times\S_{\cos s_0}^{n_1-1}\times\S_{\sin s_0}^{n_2-1}$ for suitable one-parameter families $\Sigma_{\R}$ and $\Sigma_{\S^n}$,  where $s_0=f(z_0)$.

 First, notice  that $\mathrm{Id}\colon\R\to\R$ is transnormal, since $\vert \nabla\mathrm{Id}(t)\vert=1$ for any $t\in\R$. So the function defining the eikonal equation \eqref{eikon:examp} is just
    \begin{equation*}
    F(t,z,r,\tau)=\tau-U(t,z).
    \end{equation*}
    This function is $(\mathrm{Id}\times f)$-invariant on the $(t,z)$-variables. Therefore, by Proposition \ref{Proposition:TwoDimensional}, any function of the form $u=w\circ(\mathrm{Id}\times f)$ solving \eqref{eikon:examp} must satisfy the equation 
\begin{equation}\label{equation:eik:reduced}
-w_t^2(t,s)+4\cosh^{-2}(t)w_s^2(t,s)=\widehat{U}(t,s)
    \end{equation}
    in $\R\times [0,\pi]$
    which can be written as 
\begin{equation*}
\widehat{F}\left(t,s,w(t,s),-w_t^2(t,s))+4\cosh^{-2}(t)w_s^2(t,s)\right)=0
\end{equation*}
where $\widehat{F}(t,s,w,\tau)=\tau-\widehat{U}(t,s)$.
    We proceed to solve this equation with the theory of fully nonlinear equations; cf. \cite{SalsaBook}.

    Fix $(t_0,z_0)\in \R\times\S^n\smallsetminus f^{-1}(\{0\})$ and let $s_0=f(z_0).$ Take $(p_0,q_0)\in\R^2\smallsetminus\{(0,0)\}$ such that
    \begin{equation}\label{eq:hiper}
        -p_0^2+4\cosh^{-2}(t_0)q_0^2-\widehat{U}(t_0,s_0)=0.
    \end{equation}
    Notice that such $(p_0,q_0)\neq(0,0)$ exists because \eqref{eq:hiper} defines a hyperbola in the $(p_0,q_0)$-variables.

    Then, the function $H\colon\R\times [0,\pi]\times \R^3\to\R$ given by 
    \begin{align*}
         H(t,s,r,p,q)&=\widehat{F}\left(t,s,r,-p^2+4\cosh^{-2}(t)q^2\right) \\
                    &=-p^2+4\cosh^{-2}(t) q^2-\widehat{U}(t,s)
    \end{align*}
   is used to rewrite \eqref{equation:eik:reduced} as 
   \begin{equation*}
   H(t,s,w(t,s),w_t(t,s),w_s(t,s))=0.
   \end{equation*}
   So, for any $r_0\in\R$, the point $(t_0,z_0,r_0,p_0,q_0)\in\R\times\left(\S^n\smallsetminus f^{-1}(\{0,\pi\})\right)\times \R^3$ is such that $F(t_0,z_0,\tau_0)=0,$ where 
   $\tau_0=-p_0^2+4\cosh^{-2}(t_0) q_0^2,$ and, therefore
   \begin{equation*}
   H(t_0,s_0,r_0,p_0,q_0)=0.
   \end{equation*}
    Since $\frac{\partial F}{\partial \tau}\equiv1\neq0,$
    \begin{equation*}
    H_p(t,s,r,p,q)=-2p,\quad H_q(t,s,r,p,q)=8\cosh^{-2}(t) q,
    \end{equation*}
    and the nontriviality hypothesis
    \begin{equation*}
    H_p^2+H_q^2=4p^2+64\cosh^{-2}(t) q^2
    \end{equation*}
    is satisfied for any $(p,q)\neq(0,0)$.

    For $\ep>0$, let $T,S\colon(-\ep,\ep)\to\R^3$ be such that 
    \begin{equation*}
  (T(\zeta),S(\zeta))\in\R\times(0,\pi),\quad \zeta\in(-\varepsilon,\varepsilon),
    \end{equation*}
    with 
    \[
    T(0)=t_0,\qquad S(0)=s_0\quad \text{ and }\quad T'(0)q_0,S'(0)p_0>0.
    \] 
    Then, for any smooth function $R\colon(-\ep, \ep)\to\R$ satisfying 
    \begin{equation*}
    R'(0)=p_0T'(0)+q_0S'(0),
    \end{equation*}
    the initial value problem
    \begin{equation*}
    \begin{cases}
     H(t,s,w(t,s),w_t(t.s),w_s(t,s))=0,\\
         w(T(\zeta),S(\zeta))=R(\zeta),
    \end{cases}
    \end{equation*}
    admits a $C^2$ solution in a neighborhood of $(t_0,s_0).$
    Defining $\Sigma_{\mathrm{Id}}(\zeta):=\{T(\zeta)\}$ and $\Sigma_{f}(\zeta):=f^{-1}(S(\zeta))\,$ we have that 
    \begin{equation*}
    \Sigma_{\mathrm{Id}}(\zeta)\circ C_{\mathrm{Id}}=T,\quad \Sigma_{f}(\zeta)\circ C_{f}=S,
    \end{equation*}
    and $u=w\circ(\mathrm{Id}\times f)$ is a $(\mathrm{Id}\times f)$-invariant solution defined in a neighborhood of $\{t_0\}\times\S_{\cos s_0}^{n_1-1}\times\S_{\sin s_0}^{n_2-1}$
    of the form 
    \begin{equation*}
    \Omega =(t_0-\ep_1,t_0+\ep_1)\times f^{-1}(s_0-\ep_2,s_0+\ep_2),
    \end{equation*}
    for some $\ep_1,\ep_2>0$ small enough. Moreover, $u$ satisfies that
    \begin{equation*}
    u(\Sigma_{Id}(\zeta),\Sigma_{f}(\zeta))=R(\zeta)\quad \text{ in } (-\ep_3,\ep_3),
    \end{equation*}
    for some $\ep_3>0$ small enough. \hfill$\square$
\end{example} 
\smallskip
\begin{remark}
More examples where this method applies are given by considering warped products of Riemannian space forms of the form
$L\times_{\alpha} N$, where $L$ is either the Euclidean space $\mathbb{R}^s$ or the hyperbolic space  $\mathbb{H}^s$, and $N$ may be chosen as $\mathbb{R}^k$, $\mathbb{H}^k$ or the sphere $\mathbb{S}^k$. As mentioned before, the case of the hyperbolic space is of particular interest due to the existence of transnormal functions having non isoparametric foliations \cite{Miyaoka2013}, see Example \ref{Example:HyperbolicOneDimensional} above.\hfill$\square$
\end{remark}

\bibliographystyle{plainurl}

\bibliography{References}

\end{document}